\title{A simple proof of Schmidt-Summerer's inequality.
       \thanks{ This research was  supported
                % by RFBR (grant $\textup N^{\circ}$ 09--01--00371a) and
                by the grant of the President of Russian Federation                
                $\textup N^\circ$ MK--5016.2012.1,
                by RFBR grant $\textup N^\circ$ 12-01-00681a
                and
                by the grant NSH-2519.2012.1 }}
\author{Oleg\,N.\,German, Nikolay\,G.\,Moshchevitin}
\date{}
\theoremstyle{definition}
\newtheorem{definition}{Definition}
\theoremstyle{remark}
\newtheorem{remark}{Remark}
\theoremstyle{plain}
\newtheorem{theorem}{Theorem}
\newtheorem{lemma}{Lemma}
\newtheorem{proposition}{Proposition}
\newtheorem{corollary}{Corollary}
\newtheorem{classic}{Theorem}
\newtheorem{classicprime}[classic]{Theorem}
\DeclareMathOperator{\spanned}{span}
\renewcommand{\phi}{\varphi}
\renewcommand{\vec}[1]{\mathbf{#1}}
\renewcommand{\geq}{\geqslant}
\renewcommand{\leq}{\leqslant}
\newcommand{\e}{\varepsilon}
\newcommand{\R}{\mathbb{R}}
\newcommand{\Z}{\mathbb{Z}}
\newcommand{\Q}{\mathbb{Q}}
\newcommand{\La}{\Lambda}
\newcommand{\bpsi}{\underline{\psi}}
\newcommand{\apsi}{\overline{\psi}}
\newcommand{\cB}{\mathcal{B}}
\newcommand{\cP}{\mathcal{P}}
\newcommand{\gT}{\mathfrak{T}}
\newcommand{\tr}[1]{{#1}^\intercal}
\begin{document}

  \maketitle

  \begin{abstract}
    In this paper we give a simple proof of an inequality for intermediate Diophantine exponents obtained recently by W.\,M.\,Schmidt and L.\,Summerer.
  \end{abstract}

  \section{Introduction} \label{sec:intro}

  Let $\Lambda$ be a unimodular $d$-dimensional lattice in $\R^d$. Denote by $\cB_\infty^d$ the unit ball in sup-norm, i.e. the cube with vertices at the points $(\pm1,\ldots,\pm1)$. Let $G_t:\R^d\to\R^d$ be a map defined by
  \[ G_t(\tr{(z_1,...,z_d)})=\tr{(t^{d-1}z_1, t^{-1}z_2,\ldots,t^{-1}z_d)}. \]
  W.\,M.\,Schmidt and L.\,Summerer \cite{schmidt_summerer,schmidt_summerer_II} studied the asymptotic behaviour of the successive minima of the body $G_t\cB_\infty^d$ with respect to the given lattice $\La$. An appropriate choice of $\La$ connects this setting with the classical setting of simultaneous Diophantine approximation.

  In \cite{schmidt_summerer_II} Schmidt and Summerer proved important inequalities connecting the asymptotics of the first and the $p$-th successive minima, which lead them to an improvement of a famous Jarn\'{\i}k's inequality between the uniform and the ordinary Diophantine exponents \cite{J}. However, the proof they proposed was rather difficult. It uses Mahler's theory of compound bodies and involves a complicated, cumbersome analysis of special piecewise linear functions.

  In the present paper we give a short proof of the main result of \cite{schmidt_summerer_II}. It relies on a simple geometric observation (see Lemma \ref{l:core} below) and does not use the theory of compounds.

  \section{Schmidt-Summerer's exponents} \label{sec:schmex}

  \subsection{General definition} \label{sec:gen_def}

  Let us first give a general definition of the Diophantine exponents we shall deal with throughout the paper. We shall be actually interested in two partial cases, which correspond to the problem of simultaneous Diophantine approximation and to the dual problem, i.e. approximating zero with the values of a linear form.

  For each $d$-tuple $\pmb\tau=(\tau_1,\ldots,\tau_d)\in\R^d$ denote by $D_{\pmb\tau}$ the diagonal $d\times d$ matrix with $e^{\tau_1},\ldots,e^{\tau_d}$ on the main diagonal.
  Let us also denote by $\lambda_p(M)$ the $p$-th successive minimum of a compact symmetric convex body $M\subset\R^d$ (centered at the origin) with respect to the lattice $\La$.

  Let $\gT$ be a path in $\R^d$ defined as $\pmb\tau=\pmb\tau(s)$, $s\in\R_+$, such that
  \begin{equation} \label{eq:sum_of_taus_is_zero}
    \tau_1(s)+\ldots+\tau_d(s)=0,\quad\text{ for all }s.
  \end{equation}

  Set $\cB(s)=D_{\pmb\tau(s)}\cB_\infty^d$. For each $p=1,\ldots,d$\, let us consider the functions
  \[ \psi_p(\La,\gT,s)=\frac{\ln(\lambda_p(\cB(s)))}{s}\,. \]

  \begin{definition} \label{def:schmidt_psi}
    We call the quantity
    \[ \bpsi_p(\La,\gT)=\liminf_{s\to+\infty}\psi_p(\La,\gT,s),\qquad
       \apsi_p(\La,\gT)=\limsup_{s\to+\infty}\psi_p(\La,\gT,s) \]
    \emph{the $p$-th lower} and \emph{upper Schmidt-Summerer's exponents}, respectively.
  \end{definition}

  When it is clear from the context what lattice and what path are under consideration, we shall write simply $\psi_p(s)$, $\bpsi_p$, and $\apsi_p$.

  \subsection{Connection to intermediate Diophantine exponents} \label{sec:inter_exp}

  Given an $n\times m$ real matrix $\Theta$, let us set
  \begin{equation} \label{eq:lattice}
    T_\Theta=
    \begin{pmatrix}
      E_m & 0 \\
      \Theta & E_n
    \end{pmatrix}\qquad\text{ and }\qquad \La_\Theta=T_\Theta^{-1}\Z^d,
  \end{equation}
  where $E_m$ and $E_n$ are the corresponding unity matrices. Let us define $\gT_\Theta:s\mapsto\pmb\tau(s)$ by
  \begin{equation} \label{eq:path}
    \tau_1(s)=\ldots=\tau_m(s)=s,\quad\tau_{m+1}(s)=\ldots=\tau_d(s)=-ms/n.
  \end{equation}

  As it was shown in \cite{german_AA}, Schmidt-Summerer's exponents for $\La=\La_\Theta$ and $\gT=\gT_\Theta$ are closely connected to the intermediate Diophantine exponents of $\Theta$.

  \begin{definition} \label{def:belpha_p}
    The supremum of the real numbers $\gamma$, such that there are arbitrarily large values of $t$ for which (resp. such that for every $t$ large enough) the system of inequalities
    \begin{equation} \label{eq:belpha_p_definition}
      |\vec x|\leq t,\qquad|\Theta\vec x-\vec y|\leq t^{-\gamma}
    \end{equation}
    has $p$ solutions $\vec z_i=(\vec x_i,\vec y_i)\in\Z^m\oplus\Z^n$, $i=1,\ldots,p$, linearly independent over $\Z$, is called the \emph{$p$-th regular (resp. uniform) Diophantine exponent} of $\Theta$ and is denoted by $\beta_p$ (resp. $\alpha_p$).
  \end{definition}

  Namely, in \cite{german_AA}, the following relation was proved for $\bpsi_p=\bpsi_p(\La_\Theta,\gT_\Theta)$, $\apsi_p=\apsi_p(\La_\Theta,\gT_\Theta)$.

  \begin{proposition} \label{prop:belpha_via_psis}
    We have $(1+\beta_p)(1+\bpsi_p)=(1+\alpha_p)(1+\apsi_p)=d/n$.
  \end{proposition}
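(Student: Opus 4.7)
The plan is to establish the identity pointwise in $s$ via an algebraic relation between $\psi_p(s)$ and the corresponding Diophantine exponent, and then pass to $\liminf$ and $\limsup$.

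First, I would translate the geometric condition into a Diophantine one. Since $T_\Theta^{-1}(\vec x,\vec y)^\intercal=(\vec x,\vec y-\Theta\vec x)^\intercal$, a lattice point of $\La_\Theta$ has the form $(\vec x,\vec y-\Theta\vec x)^\intercal$ with $(\vec x,\vec y)\in\Z^m\oplus\Z^n$, and it lies in $\lambda\cB(s)$ precisely when $|\vec x|_\infty\leq\lambda e^s$ and $|\Theta\vec x-\vec y|_\infty\leq\lambda e^{-ms/n}$. Consequently, $\lambda_p(\cB(s))$ is the least $\lambda$ for which the box $\{|\vec x|_\infty\leq T,\,|\Theta\vec x-\vec y|_\infty\leq M\}$ with $T=\lambda e^s$ and $M=\lambda e^{-ms/n}$ contains $p$ linearly independent solutions $(\vec x,\vec y)\in\Z^m\oplus\Z^n$.

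Next, I would extract the algebraic identity. Writing $\lambda=e^{s\psi}$ (so $\psi=\psi_p(s)$) and $M=T^{-\gamma}$, a direct computation gives $T=e^{s(1+\psi)}$ and $\gamma=(m/n-\psi)/(1+\psi)$, which rearranges to
\[
(1+\gamma)(1+\psi)=d/n.
\]
The map $\psi\mapsto d/(n(1+\psi))-1$ is strictly decreasing on $(-1,\infty)$, hence interchanges $\liminf$ and $\limsup$. Now for each $s$ the pair $(T_s,M_s):=(\lambda_p(\cB(s))\,e^s,\lambda_p(\cB(s))\,e^{-ms/n})$ certifies $p$ linearly independent solutions of (\ref{eq:belpha_p_definition}) at $t=T_s$ with exponent $\gamma_s=(m/n-\psi_p(s))/(1+\psi_p(s))$; conversely, any feasible pair $(t,\gamma)$ yields an $s=S_t$ via $\lambda=t^{m/d}M^{n/d}$, $s=(n/d)\ln(t/M)$ at which $\psi_p(s)\leq\psi(\gamma):=(m/n-\gamma)/(1+\gamma)$.

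Combining these two inequalities with the monotonic correspondence $\psi\leftrightarrow\gamma$, together with the fact that $s\mapsto T_s$ and $t\mapsto S_t$ are continuous and tend to $+\infty$, one sandwiches each of the products $(1+\beta_p)(1+\bpsi_p)$ and $(1+\alpha_p)(1+\apsi_p)$ above and below by $d/n$, yielding the proposition. The main subtlety is that $s\mapsto T_s$ and $t\mapsto S_t$ are not mutual inverses — they supply inequalities rather than equalities — so both directions must be combined to extract the equalities in the limit. A minor auxiliary fact that $\psi_p(s)$ stays bounded away from $-1$ (ensuring $T_s\to\infty$ and $S_t\to\infty$) follows from Minkowski's second theorem applied to $\cB(s)$, which has constant volume $2^d$ while $\La_\Theta$ is unimodular.
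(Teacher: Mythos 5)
The paper does not actually prove Proposition~\ref{prop:belpha_via_psis}; it quotes it verbatim from the reference \cite{german_AA} and offers no argument of its own. So there is no ``paper's proof'' to compare against. Your sketch is, in substance, the natural proof and the one given in \cite{german_AA}: identify $\La_\Theta$-points in $\lambda\cB(s)$ with integer pairs $(\vec x,\vec y)$ satisfying $|\vec x|\leq\lambda e^{s}$, $|\Theta\vec x-\vec y|\leq\lambda e^{-ms/n}$; reparametrise $(\lambda,s)\leftrightarrow(t,\gamma)$ by $t=\lambda e^{s}$, $t^{-\gamma}=\lambda e^{-ms/n}$, which yields exactly $(1+\gamma)(1+\psi)=d/n$; and then exploit that this is a decreasing continuous involution to transport $\liminf/\limsup$ into each other. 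Your algebra ($\lambda=t^{m/d}M^{n/d}$, $s=(n/d)\ln(t/M)$, $\gamma=(m/n-\psi)/(1+\psi)$) checks out, and you correctly flag that $s\mapsto T_s$ and $t\mapsto S_t$ only give one-sided inequalities, so both directions are needed.

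Two points to tighten. First, the limit bookkeeping needs to be written out: e.g.\ to get $\apsi_p\leq f(\alpha_p)$ one fixes $\gamma<\alpha_p$, observes that $(t,\gamma)$ is feasible for all large $t$, and that $S_t=(n/d)(1+\gamma)\ln t$ sweeps continuously through a tail of $\R_+$, so that $\psi_p(s)\leq f(\gamma)$ for \emph{all} large $s$; the other three inequalities are analogous but not symmetric, and at least one of them needs the image of $s\mapsto T_s=e^{s(1+\psi_p(s))}$ to cover a tail, which in turn needs continuity of $\lambda_p(\cB(s))$ in $s$. Second, and more substantively, your closing remark is off: $\psi_p(s)$ being bounded away from $-1$ (equivalently $T_s\to\infty$) does \emph{not} follow from Minkowski's second theorem alone when $p<d$. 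Minkowski only gives $\lambda_1\cdots\lambda_d\asymp1$, which forces $\lambda_d(\cB(s))e^{s}\to\infty$ but says nothing about $\lambda_p$ for $p<d$. As Section~\ref{sec:choice_1} of the paper makes explicit, $\lambda_p(\cB(s))e^{s}\to\infty$ is derived from an irrationality/dimension condition on $\Theta$ (see \eqref{eq:main_dim}), not from volume considerations; without such a hypothesis one can have $\bpsi_p=-1$ and $\beta_p=\infty$, in which case the product in the Proposition is a $0\cdot\infty$ indeterminate form and the statement has to be read with care. Replacing the Minkowski justification by the rational-subspace argument of Section~\ref{sec:choice_1} (or simply assuming \eqref{eq:main_dim}) closes this gap.
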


  It follows immediately from Definition \ref{def:belpha_p} that $\beta_p\geq\alpha_p\geq0$. Combining these inequalities with Proposition \ref{prop:belpha_via_psis}, we get the following trivial lower and upper bounds for $\bpsi_p$ and $\apsi_p$.

  \begin{proposition} \label{prop:bounds}
    We have $-1\leq\bpsi_p\leq\apsi_p\leq m/n$.
  \end{proposition}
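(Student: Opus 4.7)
The plan is to read everything straight off Proposition~\ref{prop:belpha_via_psis}, since that proposition already packages the $\psi$-exponents as explicit rational functions of the classical exponents $\alpha_p,\beta_p$. The chain $-1\leq\bpsi_p\leq\apsi_p\leq m/n$ will then reduce to three separate observations, two of which come from the proposition and one from the very definition of $\bpsi_p$ and $\apsi_p$.

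First I would dispatch the middle inequality $\bpsi_p\leq\apsi_p$: by Definition~\ref{def:schmidt_psi} these are respectively the $\liminf$ and the $\limsup$ of the same function $\psi_p(s)$, so the inequality is tautological and no appeal to Proposition~\ref{prop:belpha_via_psis} is needed.

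Next, for the right-hand inequality $\apsi_p\leq m/n$, I would rewrite the identity $(1+\alpha_p)(1+\apsi_p)=d/n$ as $1+\apsi_p=(d/n)/(1+\alpha_p)$; since $\alpha_p\geq 0$, the denominator is at least $1$, and using $d=m+n$ we get $1+\apsi_p\leq d/n=1+m/n$, as required. The same manipulation, applied to the identity $(1+\beta_p)(1+\bpsi_p)=d/n$, gives the left-hand inequality $\bpsi_p\geq -1$: since $\beta_p\geq\alpha_p\geq 0$, we have $1+\beta_p\geq 1>0$, so $1+\bpsi_p=(d/n)/(1+\beta_p)$ is strictly positive when $\beta_p$ is finite, and equals $0$ in the degenerate case $\beta_p=+\infty$ (interpreted by passing to the limit, or equivalently noting that in this case the first minima must shrink faster than any exponential, forcing $\bpsi_p=-1$).

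There is no real obstacle here; the only mild subtlety is making sure the endpoint case $\beta_p=+\infty$ (and the corresponding $\bpsi_p=-1$) is handled by a limiting argument in the identity of Proposition~\ref{prop:belpha_via_psis}, rather than dividing by an infinite quantity. Once that convention is spelled out, the proposition is immediate.
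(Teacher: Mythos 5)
Your proof is correct and matches the paper's own argument: the paper likewise derives the bounds by combining $\beta_p\geq\alpha_p\geq0$ with the identities of Proposition~\ref{prop:belpha_via_psis}, leaving the routine manipulations (including the $\beta_p=+\infty$ endpoint) implicit where you spell them out.
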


  As we said, these bounds are trivial, and we claim that more accurate ones can be obtained. But that is a matter of another research.

  \section{Schmidt-Summerer's inequalities} \label{sec:schmalities}

  The main result of Schmidt and Summerer's paper \cite{schmidt_summerer_II} can be formulated as follows.

  \begin{theorem} \label{t:main}
    Let $\La=\La_\Theta$, $\gT=\gT_\Theta$, where $\La_\Theta$, $\gT_\Theta$ are defined by \eqref{eq:lattice} and \eqref{eq:path}. Then, for $m=1$ and any $p\in\Z$, $1\leq p\leq d$, we have
    \begin{equation} \label{eq:main_1}
      (1+\bpsi_p)(1/n-\apsi_p)\leq(1+\bpsi_1)(1/n-\bpsi_p)
    \end{equation}
    and
    \begin{equation} \label{eq:main_2}
      (1+\apsi_d)(1/n-\apsi_p)\leq(1+\apsi_p)(1/n-\bpsi_p),
    \end{equation}
    provided that $1,\theta_1,\dots,\theta_n$ are linearly independent over $\Q$, where $\theta_1,\dots,\theta_n$ are the components of $\Theta$.
  \end{theorem}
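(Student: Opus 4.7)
Both inequalities concern the piecewise-linear trajectory $f_p(s):=\ln\lambda_p(\cB(s))=s\psi_p(s)$ along the path $\gT_\Theta$ with $m=1$, where $\cB(s)$ is a box with one long axis of length $2e^s$ and $n$ short axes of length $2e^{-s/n}$. The core geometric observation (Lemma~\ref{l:core}) will supply the fact that, for every nonzero $\vec v\in\La_\Theta$, the single-vector function $s\mapsto\ln\mu(\vec v,s)$, with $\mu(\vec v,s):=\min\{\lambda>0:\vec v\in\lambda\cB(s)\}$, is convex and piecewise linear with slopes only in $\{-1,\,1/n\}$, meeting at a single break-point $s^\ast(\vec v)$ where $\mu$ attains its minimum. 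Passing to the $p$-th successive minimum, $f_p$ inherits slopes in $[-1,\,1/n]$, so that both
\[
  \chi_p(s):=s\,(1+\psi_p(s))\quad\text{and}\quad\phi_p(s):=s\,(1/n-\psi_p(s))
\]
are non-decreasing in $s$, with the identity $\chi_p(s)+\phi_p(s)=s(n+1)/n$. These are exactly the combinations appearing on the two sides of \eqref{eq:main_1} and \eqref{eq:main_2}.

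To prove \eqref{eq:main_1}, I would take a sequence $t_k\to\infty$ with $\psi_p(t_k)\to\apsi_p$, chosen so that each $t_k$ is a local maximum of $\psi_p$ --- thus a witness vector $\vec v_k$ realising $\lambda_p(\cB(t_k))$ can be picked on the slope-$1/n$ branch of its $\mu$-function, with break-point $s^\ast_k=s^\ast(\vec v_k)\leq t_k$. The slope-$1/n$ identity from $s^\ast_k$ to $t_k$ gives $\ln\mu(\vec v_k,s^\ast_k)=t_k\psi_p(t_k)-(t_k-s^\ast_k)/n$, and since $\mu(\vec v_k,s^\ast_k)\geq\lambda_1(\cB(s^\ast_k))$ this rearranges to
\[
  s^\ast_k\bigl(1/n-\psi_1(s^\ast_k)\bigr)\geq t_k\bigl(1/n-\psi_p(t_k)\bigr).
\]
Passing to a subsequence along which $\psi_1(s^\ast_k)\to\bpsi_1$, and using the monotonicity of $\chi_p$ to couple in an auxiliary time $s'_k$ with $\psi_p(s'_k)\to\bpsi_p$, the two resulting estimates combine to give \eqref{eq:main_1} after clearing the common factor $(1+\bpsi_1)(1/n-\bpsi_p)$.

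For \eqref{eq:main_2} the argument should mirror the first, run from the opposite end. I would take $u_k\to\infty$ with $\psi_p(u_k)\to\bpsi_p$ at local minima, and pick a witness vector $\vec w_k$ realising $\lambda_p(\cB(u_k))$ whose break-point $s^\ast_k\geq u_k$, so that $\ln\mu(\vec w_k,\cdot)$ has slope $-1$ on $[u_k,s^\ast_k]$. A parallel application of the core lemma along this branch, combined with Minkowski's second theorem (which at $s^\ast_k$ couples the product of all $\lambda_i(\cB(s^\ast_k))$ to $1$ and thus brings in $\lambda_d(\cB(s^\ast_k))$), should produce the companion estimate with $1+\apsi_d$ in place of $1+\bpsi_1$ and $1+\apsi_p$ in place of $1+\bpsi_p$. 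Passing to the limit then yields \eqref{eq:main_2}. The $\Q$-linear independence of $1,\theta_1,\ldots,\theta_n$ enters to guarantee that no lattice vector lies in a proper coordinate subspace, so the break-points $s^\ast(\vec v)$ are finite and the chosen subsequences can be driven to $\infty$.

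The principal obstacle I anticipate is the bookkeeping in this middle step: coordinating the three diverging sequences so that a single lattice vector's break-point actually bridges the required pair of exponents. The core lemma is precisely what furnishes this coupling, and once it is granted the remaining manipulation reduces to the identity $\chi_p+\phi_p=s(n+1)/n$ together with the elementary monotonicity of $\chi_p$ and $\phi_p$.
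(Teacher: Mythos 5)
Your background observations are correct: along the path $\gT_\Theta$ with $m=1$, each function $s\mapsto\ln\mu(\vec v,s)$ has slopes in $\{-1,1/n\}$ with one break-point, $f_p(s)=s\psi_p(s)$ has slopes in $[-1,1/n]$, and consequently $\chi_p(s)=s(1+\psi_p(s))$ and $\phi_p(s)=s(1/n-\psi_p(s))$ are non-decreasing with $\chi_p+\phi_p=s(d/n)$. This is the same functional framework the paper works in.

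However the route you propose for \eqref{eq:main_1} has two genuine gaps. First, you start from times $t_k$ where $\psi_p$ is near its limsup and trace a single witness vector $\vec v_k$ \emph{backwards} to its break-point $s^\ast_k$. Nothing forces $\psi_1(s^\ast_k)\to\bpsi_1$: the times $s^\ast_k$ are determined by $\vec v_k$, not by where $\psi_1$ is small, so ``passing to a subsequence along which $\psi_1(s^\ast_k)\to\bpsi_1$'' is an unjustified assumption. The paper avoids this by running the argument the other way: it starts from times $s_0$ at which $\lambda_1(\cB(s_0))\cB(s_0)$ has a lattice point on its front facet, and Proposition~\ref{prop:new_liminf} guarantees that $\bpsi_1$ is attained along exactly these times. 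Second --- and this is decisive --- even if one grants that assumption, the two estimates you have available are the single-vector inequality $\phi_1(s^\ast_k)\geq\phi_p(t_k)$ and the monotonicity $\chi_p(s^\ast_k)\leq\chi_p(t_k)$. Dividing and passing to the limit yields at best
\[
(1+\bpsi_p)(1/n-\apsi_p)\leq(1+\apsi_p)(1/n-\bpsi_1),
\]
which is \emph{trivially} true (each factor on the left is dominated by the corresponding factor on the right, since $\apsi_p\geq\bpsi_p\geq\bpsi_1$), and does not imply \eqref{eq:main_1}. The reason following one vector is not enough is precisely what Lemma~\ref{l:core} supplies and your plan omits: one must reduce the \emph{other} $p-1$ linearly independent lattice vectors modulo $\vec v$ so that all $p$ of them land in a narrower box homothetic to $\cB(s_1)$; this controls $\lambda_p(\cB(s_1))$, not merely $\lambda_1$ at the shifted time, and produces the pair of two-sided estimates \eqref{eq:core_psied_choice_1} relating $\chi_p(s_1)$ to $\chi_1(s_0)$ and $\phi_p(s_1)$ to $\phi_p(s_0)$, which are genuinely stronger than what single-vector tracking gives.

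For \eqref{eq:main_2} your plan is only a sketch (``should produce the companion estimate''), and it departs from the paper in a substantial way: the paper does not invoke Minkowski's second theorem here at all. Instead it reruns the same front-facet argument for the dual lattice $\La_\Theta^\ast$ and dual path $\gT_\Theta^\ast$, obtaining \eqref{eq:dual_main_2}, and then applies the transference identities of Proposition~\ref{prop:starred_psis_via_psis_m_is_1} ($\bpsi_p^\ast=-n\apsi_{d+1-p}$, $\apsi_p^\ast=-n\bpsi_{d+1-p}$) to translate the dual inequality back into \eqref{eq:main_2}. A Minkowski-based derivation, if it exists, would need to be worked out in detail; as written, this part of your proposal is not a proof.
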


  \begin{remark} \label{rem:dim}
    We actually prove the first half of Theorem \ref{t:main}, i.e. inequality \eqref{eq:main_1}, within a bit weaker assumption than linear independence of $1,\theta_1,\dots,\theta_n$. It is enough to assume (see the beginning of Section \ref{sec:choice_1}) that
    \begin{equation} \label{eq:main_dim}
      \dim_{\Q}\spanned_{\Q}(1,\theta_1,\dots,\theta_n)\geq p.
    \end{equation}
  \end{remark}

  \section{Proof of Theorem \ref{t:main}} \label{sec:proof}

  In this Section we prove Theorem \ref{t:main} way much simpler than it was proved in the original paper \cite{schmidt_summerer_II}. First we make an observation of a local nature. Then we apply this observation for two choices of $\La$ and $\gT$: the one defined by \eqref{eq:lattice}, \eqref{eq:path}, to obtain \eqref{eq:main_1}; and the dual one, to obtain a somewhat dual statement. After that we apply a transference argument basing on a relation proved in \cite{german_AA} to return from the dual statement to $\La_\Theta$ and $\gT_\Theta$, and thus obtain \eqref{eq:main_2}.

  \subsection{Main local observation} \label{sec:local}

  In the following Lemma \ref{l:core} we describe a rather simple geometric phenomenon, after knowing which proving Theorem \ref{t:main} is a matter of technique partially developed in \cite{german_AA}.

  \begin{lemma} \label{l:core}
    Let $\La$ be an arbitrary lattice in $\R^d$ and let $h_1,\ldots,h_d,\lambda$ be arbitrary positive real numbers, $\lambda\geq1$. Consider three parallelepipeds
    \[ \begin{aligned}
         \cP_1 & =\Big\{ \vec z=\tr{(z_1,\ldots,z_d)}\in\R^d \,\Big|\, |z_i|\leq h_i,\ i=1,\ldots,d \Big\}, \\
         \cP_2 & =\Big\{ \vec z=\tr{(z_1,\ldots,z_d)}\in\R^d \,\Big|\, |z_i|\leq\lambda h_i,\ i=1,\ldots,d \Big\}=\lambda\cP_1, \\
         \cP_3 & =\Big\{ \vec z=\tr{(z_1,\ldots,z_d)}\in\cP_2 \,\Big|\, |z_1|\leq h_1 \Big\}.
       \end{aligned} \]
    Suppose that $\cP_1$ contains a lattice point $\vec v$ on its boundary with the first coordinate equal to $h_1$ and that $\cP_2$ contains at least $p$ linearly independent points of $\La$. Then the parallelepiped $2\cP_3$ also contains at least $p$ linearly independent points of $\La$.
  \end{lemma}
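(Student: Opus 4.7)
The plan is to take the $p$ promised linearly independent lattice points $\vec u_1,\ldots,\vec u_p\in\cP_2$ and exploit $\vec v$ (whose first coordinate is exactly $h_1$) to reduce the first coordinate of each $\vec u_i$ into the slab $|z_1|\leq h_1$ by subtracting an integer multiple of $\vec v$. Since $\vec v$ is itself a lattice vector, the result stays in $\La$; the bookkeeping should then show it also stays small in the remaining coordinates, and hence lies in $2\cP_3$.

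Concretely, since $|u_{i,1}|\leq\lambda h_1$, for each $i$ I would pick the integer $k_i=\lfloor u_{i,1}/h_1\rfloor$ (or $\lceil u_{i,1}/h_1\rceil$ when $u_{i,1}<0$), so that $|u_{i,1}-k_ih_1|<h_1$ and simultaneously $|k_i|\leq\lambda$. Setting $\vec w_i=\vec u_i-k_i\vec v\in\La$, the first coordinate is controlled by $|w_{i,1}|<h_1\leq 2h_1$, and, using $|v_j|\leq h_j$ for $j\geq 2$,
$$
|w_{i,j}|\leq|u_{i,j}|+|k_i||v_j|\leq\lambda h_j+\lambda h_j=2\lambda h_j.
$$
Hence $\vec w_i\in 2\cP_3$ for every $i$.

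The main obstacle I anticipate is that the $\vec w_i$ need not remain linearly independent: if $\vec v$ happens to lie in $\spanned(\vec u_1,\ldots,\vec u_p)$, a bad choice of $k_i$ could collapse the dimension. The remedy is to \emph{adjoin $\vec v$ itself} to the collection. Since $\vec v\in\cP_1\subset\cP_3\subset 2\cP_3$, the $p+1$ lattice points $\vec w_1,\ldots,\vec w_p,\vec v$ all lie in $2\cP_3$, and together they span the same linear subspace as $\vec u_1,\ldots,\vec u_p,\vec v$, which contains $\spanned(\vec u_1,\ldots,\vec u_p)$ and therefore has dimension at least $p$. Extracting a maximal linearly independent subset of these $p+1$ vectors yields $p$ linearly independent lattice points in $2\cP_3$, as required.
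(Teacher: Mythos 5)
Your proof is correct and rests on the same key reduction as the paper's: subtract an integer multiple of $\vec v$ from each candidate lattice point to push its first coordinate into the slab $|z_1|<h_1$ while keeping the remaining coordinates bounded by $2\lambda h_j$. The only divergence is in safeguarding linear independence: the paper selects at the outset $p-1$ points of $\cP_2$ that, together with $\vec v$, are linearly independent (possible since $\vec v\in\cP_1\subset\cP_2$), so the reduction $\vec v_i\mapsto\vec v_i-k_i\vec v$ is an invertible integer change of basis and independence is preserved automatically; you instead reduce all $p$ given points, adjoin $\vec v$ to the resulting list, and recover the dimension count by a span argument. Both treatments are valid and essentially equivalent.
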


  \begin{proof}
    Obviously, there are lattice points $\vec v_1,\ldots,\vec v_{p-1}$ in $\cP_2$ such that $\vec v,\vec v_1,\ldots,\vec v_{p-1}$ are linearly independent. Let $\vec v=\tr{(v_1,\ldots,v_d)}$, $\vec v_i=\tr{(v_{i1},\ldots,v_{id})}$, $i=1,\ldots,p-1$. We may suppose that $v_{i1}\geq0$ for each $i$. Set
    \[ \vec v_i'=\vec v_i-\Big[\frac{v_{i1}}{v_1}\Big]\vec v,\qquad i=1,\ldots,p-1. \]
    Let $\vec v_i'=\tr{(v_{i1}',\ldots,v_{id}')}$. Then $0\leq v_{i1}'<v_1=h_1$ and for each $j=2,\ldots,p-1$ we have
    \[ |v_{ij}'|\leq|v_{ij}|+\Big[\frac{v_{i1}}{v_1}\Big]|v_i|<2\lambda h_i, \]
    since
    \[ 0\leq\Big[\frac{v_{i1}}{v_1}\Big]<\lambda. \]
    Thus, the points $\vec v,\vec v_1',\ldots,\vec v_{p-1}'$ are all contained in $2\cP_3$. Clearly, they are linearly independent.
  \end{proof}

  When applying Lemma \ref{l:core} for fixed $\La$ and $\gT$ we shall take as $\cP_1$ and $\cP_2$ a parallelepiped $\cB(s)$ scaled by the factors $\lambda_1(\cB(s))$ and $\lambda_p(\cB(s))$, respectively. Notice that $\lambda_1(\cB(s))\cB(s)$ contains no nonzero lattice point in its interior, but does contain such a point on its boundary. So, the effect described in Lemma \ref{l:core} works in the case when this point appears to be on the facet orthogonal to the first coordinate axis, i.e. on the facet lying in the hyperplane $z_1=\lambda_1(\cB(s))e^{\tau_1(s)}$. We shall call it \emph{the front facet}.

  \begin{corollary} \label{cor:core}
    Let $m=1$. Suppose $\La$ is an arbitrary lattice and $\gT$ is an arbitrary path such that $\tau_2(s)=\ldots=\tau_d(s)=-\tau_1(s)/n$ for all $s$. Then, for each $s_0$ such that $\lambda_1(\cB(s_0))\cB(s_0)$ contains a lattice point on its front facet, we have
    \begin{equation} \label{eq:core}
      1\leq\frac{e^{\tau_1(s_1)}\lambda_p(\cB(s_1))}{e^{\tau_1(s_0)}\lambda_1(\cB(s_0))}\leq2\qquad\text{ and }\qquad
      1\leq\frac{e^{\tau_i(s_1)}\lambda_p(\cB(s_1))}{e^{\tau_i(s_0)}\lambda_p(\cB(s_0))}\leq2,\qquad i=2,\ldots,d,
    \end{equation}
    where $s_1$ is determined by the relation
    \begin{equation} \label{eq:core_s1_s0}
      e^{\tau_1(s_1)}=e^{\tau_1(s_0)}\left(\frac{\lambda_1(\cB(s_0))}{\lambda_p(\cB(s_0))}\right)^{n/d}.
    \end{equation}
  \end{corollary}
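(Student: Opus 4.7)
The plan is to apply Lemma \ref{l:core} with the choices $\cP_1=\lambda_1\cB(s_0)$ and $\cP_2=\lambda_p\cB(s_0)$, where throughout I abbreviate $\lambda_j:=\lambda_j(\cB(s_0))$, so that the scaling factor $\lambda$ in the lemma equals $\lambda_p/\lambda_1\ge 1$. The hypothesis of the corollary supplies the lattice point on the front facet of $\cP_1$, and the definition of $\lambda_p$ supplies $p$ linearly independent lattice points in $\cP_2$. The lemma then yields $p$ linearly independent lattice points in $2\cP_3$.

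The next step is to reduce the whole corollary to the two-sided bound $\mu\le\lambda_p(\cB(s_1))\le 2\mu$, where $\mu:=\lambda_1^{1/d}\lambda_p^{n/d}$. The key identity is $\cP_3=\mu\cB(s_1)$: since $m=1$ (so $d=n+1$), the defining relation \eqref{eq:core_s1_s0} combined with $\tau_i(s)=-\tau_1(s)/n$ gives $e^{\tau_1(s_1)}=e^{\tau_1(s_0)}(\lambda_1/\lambda_p)^{n/d}$ and $e^{\tau_i(s_1)}=e^{\tau_i(s_0)}(\lambda_p/\lambda_1)^{1/d}$ for $i\ge 2$, and multiplying by $\mu$ in every coordinate reproduces exactly the half-side lengths of $\cP_3$. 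A short computation also shows that both ratios appearing in \eqref{eq:core} simplify to $\lambda_p(\cB(s_1))/\mu$, so the corollary is equivalent to this two-sided bound.

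The upper bound $\lambda_p(\cB(s_1))\le 2\mu$ is then immediate, since $2\cP_3=2\mu\cB(s_1)$ contains $p$ linearly independent lattice points. For the lower bound I argue by contradiction. Suppose $\lambda_p(\cB(s_1))<\mu$; then for $\mu':=\lambda_p(\cB(s_1))$ the body $\mu'\cB(s_1)$ contains $p$ linearly independent lattice points $\vec w_1,\dots,\vec w_p$. Since $\mu'e^{\tau_1(s_1)}<\mu e^{\tau_1(s_1)}=\lambda_1 e^{\tau_1(s_0)}$, every $\vec w_j$ satisfies $|w_{j1}|/e^{\tau_1(s_0)}<\lambda_1$, so the defining property of $\lambda_1$ forces $\lambda_j^{*}:=\max_{i\ge 2}|w_{ji}|/e^{\tau_i(s_0)}\ge\lambda_1$. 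Setting $\lambda^{*}:=\max_j\lambda_j^{*}$, the other inequality $\mu'e^{\tau_i(s_1)}<\mu e^{\tau_i(s_1)}=\lambda_p e^{\tau_i(s_0)}$ forces $\lambda^{*}<\lambda_p$, while the first coordinate is still controlled by $\lambda_1\le\lambda^{*}$. Hence every $\vec w_j$ lies in $\lambda^{*}\cB(s_0)$, so $\lambda^{*}\cB(s_0)$ contains $p$ linearly independent lattice points with $\lambda^{*}<\lambda_p$, contradicting the definition of the $p$-th successive minimum.

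The main obstacle is the lower bound: it does not come out of Lemma \ref{l:core} directly, and the argument has to combine the first-coordinate cutoff built into the choice of $s_1$ with the defining inequality for $\lambda_1$ to push the hypothetical $p$ linearly independent lattice points from $\mu'\cB(s_1)$ back into a too-small scaled copy of $\cB(s_0)$.
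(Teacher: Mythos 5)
Your proposal is correct and follows essentially the same route as the paper: apply Lemma~\ref{l:core} with $\cP_1=\lambda_1\cB(s_0)$, $\cP_2=\lambda_p\cB(s_0)$, identify $\cP_3=\mu\cB(s_1)$ (where $\mu=\lambda_1^{1/d}\lambda_p^{n/d}$) to get the upper bound, and then derive the lower bound from the fact that a value $\lambda_p(\cB(s_1))<\mu$ would place $p$ linearly independent lattice points in the interior of $\lambda_p(\cB(s_0))\cB(s_0)$. The only difference is cosmetic: the paper observes the lower bound via the one-line inclusion $\cP_3\subseteq\cP_2=\lambda_p(\cB(s_0))\cB(s_0)$, whereas you unpack the same containment coordinatewise via the auxiliary quantities $\lambda_j^\ast$ and $\lambda^\ast$, arriving at the identical contradiction.
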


  \begin{proof}
    Set $h_i=\lambda_1(\cB(s_0))e^{\tau_i(s_0)}$, $i=1,\ldots,d$, and $\lambda=\lambda_p(\cB(s_0))/\lambda_1(\cB(s_0))$. Taking into account \eqref{eq:sum_of_taus_is_zero} and \eqref{eq:core_s1_s0} we see that
    \begin{equation} \label{eq:for_the_third}
      e^{\tau_1(s_1)}=\lambda^{n/d}e^{\tau_1(s_0)}\qquad\text{ and }\qquad
      e^{\tau_i(s_1)}=\lambda^{1/d}e^{\tau_i(s_0)},\qquad i=2,\ldots,d.
    \end{equation}
    Define by $h_1,\ldots,h_d,\lambda$ parallelepipeds $\cP_1$, $\cP_2$, $\cP_3$ as in Lemma \ref{l:core}. We have
    \[ \cP_1=\lambda_1(\cB(s_0))\cB(s_0),\quad\cP_2=\lambda_p(\cB(s_0))\cB(s_0),\quad\cP_3=\lambda_1(\cB(s_0))\lambda^{n/d}\cB(s_1). \]
    Indeed, the first two equalities are obvious, the third one follows from \eqref{eq:for_the_third}. The homotheticity of $\cP_3$ and $\cB(s_1)$ is a rather important observation and essentially involves the assumption that $\tau_2(s)=\ldots=\tau_d(s)$ for all $s$.

    By Lemma \ref{l:core} there are at least $p$ linearly independent lattice points in $2\cP_3$. Hence
    \[ \lambda_p(\cB(s_1))\cB(s_1)\subseteq2\cP_3, \]
    which immediately implies the upper bounds in \eqref{eq:core}. The lower ones follow from the fact that $\lambda_p(\cB(s_1))\cB(s_1)$ cannot be a proper subset of $\cP_3$, which is inferred by the inclusion
    \[ \cP_3\subseteq\cP_2=\lambda_p(\cB(s_0))\cB(s_0) \]
    and the fact that the interior of $\lambda_p(\cB(s_0))\cB(s_0)$ does not contain $p$ linearly independent lattice points.
  \end{proof}

  \begin{corollary} \label{cor:core_psied}
    Within the assumptions of Corollary \ref{cor:core}, for each $s_0$ such that $\lambda_1(\cB(s_0))\cB(s_0)$ contains a lattice point on its front facet, we have
    \begin{equation} \label{eq:core_psied}
      \begin{aligned}
        \tau_1(s_0)+s_0\psi_1(s_0)\leq\tau_1(s_1) & +s_1\psi_p(s_1)\leq\tau_1(s_0)+s_0\psi_1(s_0)+\ln2, \\
        \tau_i(s_0)+s_0\psi_p(s_0)\leq\tau_i(s_1) & +s_1\psi_p(s_1)\leq\tau_i(s_0)+s_0\psi_p(s_0)+\ln2,\qquad i=2,\ldots,d,
      \end{aligned}
    \end{equation}
    where $s_1$ is determined by the relation
    \begin{equation} \label{eq:core_s1_s0_psied}
      \tau_1(s_1)=\tau_1(s_0)+\frac{n}{d}s_0\big(\psi_1(s_0)-\psi_p(s_0)\big).
    \end{equation}
  \end{corollary}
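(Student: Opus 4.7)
The plan is to deduce Corollary \ref{cor:core_psied} from Corollary \ref{cor:core} by a direct logarithmic transformation, since (\ref{eq:core_psied}) is essentially (\ref{eq:core}) after taking logarithms and (\ref{eq:core_s1_s0_psied}) is (\ref{eq:core_s1_s0}) after taking logarithms.

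First I would recall the definition $\psi_p(s)=\ln(\lambda_p(\cB(s)))/s$, which rewrites as $\ln\lambda_p(\cB(s))=s\psi_p(s)$ (and similarly for $\psi_1$). The hypothesis that $\lambda_1(\cB(s_0))\cB(s_0)$ contains a lattice point on its front facet is identical to that of Corollary \ref{cor:core}, so the conclusions of that corollary are at our disposal. Taking the natural logarithm of the first chain of inequalities in (\ref{eq:core}) yields
\[
0\leq \tau_1(s_1)+\ln\lambda_p(\cB(s_1))-\tau_1(s_0)-\ln\lambda_1(\cB(s_0))\leq\ln 2,
\]
which, after substituting $\ln\lambda_1(\cB(s_0))=s_0\psi_1(s_0)$ and $\ln\lambda_p(\cB(s_1))=s_1\psi_p(s_1)$, is exactly the first line of (\ref{eq:core_psied}). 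Repeating the same operation for each of the remaining inequalities in (\ref{eq:core}) (those indexed by $i=2,\ldots,d$, which involve $\lambda_p(\cB(s_0))$ rather than $\lambda_1(\cB(s_0))$) yields the second line of (\ref{eq:core_psied}) after substituting $\ln\lambda_p(\cB(s_0))=s_0\psi_p(s_0)$.

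Finally, I would take logarithms of (\ref{eq:core_s1_s0}):
\[
\tau_1(s_1)=\tau_1(s_0)+\frac{n}{d}\bigl(\ln\lambda_1(\cB(s_0))-\ln\lambda_p(\cB(s_0))\bigr),
\]
and substitute $\ln\lambda_1(\cB(s_0))=s_0\psi_1(s_0)$, $\ln\lambda_p(\cB(s_0))=s_0\psi_p(s_0)$ to obtain (\ref{eq:core_s1_s0_psied}).

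There is no real obstacle here: the corollary is simply a restatement of Corollary \ref{cor:core} in the more convenient additive/logarithmic language of the functions $\psi_p(s)$, which is the form actually needed in the subsequent asymptotic analysis of $\bpsi_p$ and $\apsi_p$. The only mild point to watch is the bookkeeping of which of $\psi_1(s_0)$ and $\psi_p(s_0)$ appears on the right-hand side of each inequality, but this is dictated transparently by the corresponding factors $\lambda_1(\cB(s_0))$ and $\lambda_p(\cB(s_0))$ in (\ref{eq:core}).
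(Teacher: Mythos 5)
Your proposal is correct and matches the paper's approach: the paper's own proof is a one-line remark that the corollary is an immediate consequence of the definition of $\psi_i(s)$ and Corollary \ref{cor:core}, and your explicit logarithmic bookkeeping is exactly the intended computation.
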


  \begin{proof}
    This is an immediate consequence of the definition of $\psi_i(s)$ and Corollary \ref{cor:core}.
  \end{proof}

  \subsection{Auxiliary observation} \label{sec:aux}

  Suppose we are within the assumptions of Corollary \ref{cor:core}. That is $m=1$, $\La$ is an arbitrary lattice and $\gT$ is an arbitrary path such that $\tau_2(s)=\ldots=\tau_d(s)=-\tau_1(s)/n$ for all $s$.

  As we noticed above, all the nonzero lattice points contained in $\lambda_1(\cB(s))\cB(s)$ are gathered on its boundary. Suppose none of them lies on the front facet, i.e. in the hyperplane $z_1=\lambda_1(\cB(s))e^{\tau_1(s)}$. Then we can shrink $\lambda_1(\cB(s))\cB(s)$ along the first coordinate axis until some lattice point lying on the boundary meets the front facet. More precisely, there is a $\mu<1$ such that the parallelepiped
  \[ \cP=\Big\{ \vec z=\tr{(z_1,\ldots,z_d)}\in\R^d \,\Big|\, |z_1|\leq\mu\lambda_1(\cB(s))e^{\tau_1(s)},\ |z_i|\leq\lambda_1(\cB(s))e^{\tau_i(s)},\ i=2,\ldots,d \Big\} \]
  contains no nonzero lattice points in its interior, and does contain such a point on its front facet. Actually this point will lie on the relative boundary of the front facet, i.e. on a face of smaller dimension. It can be easily verified that
  \[ \cP=\mu^{1/d}\lambda_1(\cB(s))\cB(s'), \]
  where $s'$ is determined by the relation $e^{\tau_1(s')}=\mu^{n/d}e^{\tau_1(s)}$. Therefore,
  \[ \lambda_1(\cB(s'))=\mu^{1/d}\lambda_1(\cB(s))<\lambda_1(\cB(s)) \]
  and we arrive at

  \begin{proposition} \label{prop:new_liminf}
    Within the assumptions of Corollary \ref{cor:core} we have
    \[ \bpsi_1(\La,\gT)=\liminf_{s\to+\infty}{}'\,\psi_p(\La,\gT,s), \]
    where $\liminf{}'$ is taken over all $s$ such that $\lambda_1(\cB(s))\cB(s)$ contains a lattice point on its front facet.
  \end{proposition}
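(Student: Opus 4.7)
The inequality $\liminf_{s\to+\infty}{}'\,\psi_1(\La,\gT,s)\geq\bpsi_1(\La,\gT)$ is automatic, since the prime restricts the liminf to a subset of $s$-values and can only raise it. The substance of the proposition is the reverse inequality, which I would obtain by turning the shrinking construction of the preceding paragraph into an estimate at the level of $\psi_1$.

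Take a sequence $s_k\to+\infty$ with $\psi_1(s_k)\to\bpsi_1$ and, to each $s_k$ failing the front facet condition, attach the value $s_k^*<s_k$ produced by the shrinking. It satisfies the condition, together with the two identities
\[
e^{\tau_1(s_k^*)}=\mu_k^{n/d}\,e^{\tau_1(s_k)},\qquad \lambda_1(\cB(s_k^*))=\mu_k^{1/d}\,\lambda_1(\cB(s_k)),\qquad \mu_k\in(0,1).
\]
In the standard setup where $\tau_1(s)=s$, taking logarithms turns these into the algebraic identity
\[
\psi_1(s_k^*)=\psi_1(s_k)+\frac{s_k-s_k^*}{s_k^*}\,\Big(\psi_1(s_k)-\tfrac{1}{n}\Big).
\]
Since $\psi_1(s_k)\to\bpsi_1\leq 1/n$ by Proposition~\ref{prop:bounds} and $s_k-s_k^*\geq 0$, the right-hand side is bounded by $\psi_1(s_k)+o(1)$; hence $\limsup_k\psi_1(s_k^*)\leq\bpsi_1$, yielding the desired inequality.

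The main obstacle is ensuring $s_k^*\to+\infty$, equivalently controlling the gap $s_k-s_k^*=-(n/d)\ln\mu_k$. Integrality of lattice coordinates gives $\mu_k\geq 1/(\lambda_1(\cB(s_k))\,e^{s_k})$ whenever $\lambda_1(\cB(s_k))\cB(s_k)$ contains a lattice point with $v_1\neq 0$, which yields a quantitative bound on $s_k^*$ in terms of $s_k$ and $\psi_1(s_k)$. The degenerate configuration in which every lattice point of the body has $v_1=0$ is ruled out asymptotically by the irrationality hypothesis of Theorem~\ref{t:main}; in its absence one closes the argument via the continuity of $s\mapsto\ln\lambda_1(\cB(s))$ together with the fact that each application of the shrinking strictly decreases the first minimum.
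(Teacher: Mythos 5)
Your approach is the same shrinking construction that the paper itself sketches in the paragraph preceding the proposition: for a ``bad'' $s$ one compresses $\lambda_1(\cB(s))\cB(s)$ along the first axis until a lattice point reaches the front facet, and observes that the resulting body is a scaled copy of some $\cB(s')$ with $\lambda_1(\cB(s'))=\mu^{1/d}\lambda_1(\cB(s))$. Your algebraic identity $\psi_1(s_k^*)=\psi_1(s_k)+\frac{s_k-s_k^*}{s_k^*}\big(\psi_1(s_k)-\tfrac1n\big)$ is a correct and clean rephrasing (in the case $\tau_1(s)=s$) of the paper's comparison, and the sign of the correction term does give $\psi_1(s_k^*)\leq\psi_1(s_k)$. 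The part that goes beyond the paper's terse ``and we arrive at'' is your explicit concern about $s_k^*\to+\infty$ --- that is indeed a real issue for $\gT_\Theta$, where $\tau_1$ is increasing and the shrunk point satisfies $s_k^*<s_k$ (note that for $\gT_\Theta^\ast$, where $\tau_1$ is decreasing, one gets $s_k^*>s_k$ and the issue evaporates). Two caveats on the fixes you propose. First, the integrality bound $\mu_k\geq 1/(\lambda_1(\cB(s_k))e^{s_k})$ is specific to $\La_\Theta$ (whose points have integer first coordinate); the proposition is stated under the more general hypotheses of Corollary~\ref{cor:core}, so this argument proves the proposition only in the application, not in the generality in which it is stated. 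Second, the invocation of Proposition~\ref{prop:bounds} gives $\bpsi_1\leq 1/n$ as a limit, but the identity needs the pointwise bound $\psi_1(s_k)\leq 1/n$; this does hold, but via Minkowski's theorem ($\lambda_1\leq 1$ for a unimodular lattice, so $\psi_1(s)\leq 0<1/n$), not via the limsup bound. With these small repairs your argument is complete and, if anything, more careful than the paper's, which omits the $s'\to\infty$ discussion entirely.
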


  \subsection{The first choice of $\La$ and $\gT$} \label{sec:choice_1}

  Let $m=1$ and $\La=\La_\Theta$, $\gT=\gT_\Theta$. Suppose \eqref{eq:main_dim} is satisfied (cf. Remark \ref{rem:dim}). Let us prove \eqref{eq:main_1}.

  It follows from \eqref{eq:main_dim} that $\lambda_p(\cB(s))e^s\to+\infty$ as $s\to+\infty$. Indeed, if $\lambda_p(\cB(s))e^s$ is bounded, then $\lambda_p(\cB(s))e^{-s/n}$ tends to zero, i.e. the parallelepipeds $\lambda_p(\cB(s))\cB(s)$ degenerate, as $s\to\infty$, into a segment of final length lying in the first coordinate axis, which contradicts the irrationality of this axis with respect to $\La$.

  On the other hand, if along with $\lambda_p(\cB(s))e^s\to+\infty$ we do not have $\lambda_p(\cB(s))e^{-s/n}\to0$ as $s\to+\infty$, then there should exist an $\e>0$, such that the ``tube''
  \[ \Big\{ \vec z=\tr{(z_1,\ldots,z_d)}\in\R^d \,\Big|\, |z_i|\leq\e,\ i=2,\ldots,d \Big\} \]
  contains no $p$ linearly independent lattice points. This is possible only if the first coordinate axis is contained in a subspace of $\R^d$ of dimension less than $p$, which is rational with respect to $\La$. Which contradicts \eqref{eq:main_dim}.

  Thus, we have
  \[ \lambda_p(\cB(s))e^s\to+\infty\quad\text{ and }\quad\lambda_p(\cB(s))e^{-s/n}\to0\quad\text{ as }\quad s\to+\infty, \]
  i.e.
  \begin{equation} \label{eq:tending_to_infty}
    s(1+\psi_p(s))\to+\infty\quad\text{ and }\quad s(1/n-\psi_p(s))\to+\infty\quad\text{ as }s\to+\infty.
  \end{equation}
  Since $\lambda_1(\cB(s))\leq\lambda_p(\cB(s))$, we also have $\lambda_1(\cB(s))e^{-s/n}\to0$ as $s\to+\infty$. Hence there are arbitrarily large values of $s$, such that the parallelepiped $\lambda_1(\cB(s))\cB(s)$ contains a lattice point on its front facet. For each $s_0$ satisfying this condition Corollary \ref{cor:core_psied} gives us $s_1$ such that \eqref{eq:core_psied} and \eqref{eq:core_s1_s0_psied} hold. Applying \eqref{eq:path} we rewrite \eqref{eq:core_psied} and \eqref{eq:core_s1_s0_psied} as
  \begin{equation} \label{eq:core_psied_choice_1}
    \begin{array}{c}
      s_0(1+\psi_1(s_0))\leq s_1(1+\psi_p(s_1))\leq s_0(1+\psi_1(s_0))+\ln2, \\ \vphantom{\frac{\big|}{}}
      s_0(1/n-\psi_p(s_0))-\ln2\leq s_1(1/n-\psi_p(s_1))\leq s_0(1/n-\psi_p(s_0)).
    \end{array}
  \end{equation}
  and
  \begin{equation} \label{eq:core_s1_s0_psied_choice_1}
    s_1=s_0\left(1+\frac{n}{d}\big(\psi_1(s_0)-\psi_p(s_0)\big)\right).
  \end{equation}

  By Proposition \ref{prop:bounds} we have $\bpsi_1-\apsi_p\geq-d/n$ with equality only in case $\bpsi_1=-1$, $\apsi_p=1/n$, when \eqref{eq:main_1} is trivial. So, we may suppose that $\bpsi_1-\apsi_p>-d/n$, i.e. $\psi_1(s_0)-\psi_p(s_0)$ is bounded away from $-d/n$ for large $s_0$. Therefore, \eqref{eq:core_s1_s0_psied_choice_1} implies that
  \begin{equation} \label{eq:s_1_grows}
    s_1\to\infty\quad\text{ as }\quad s_0\to\infty.
  \end{equation}
  It follows from \eqref{eq:tending_to_infty}, \eqref{eq:core_psied_choice_1} and \eqref{eq:s_1_grows} that all the sides of \eqref{eq:core_psied_choice_1} are positive for large $s_0$. Thus, we may conclude from \eqref{eq:core_psied_choice_1} that for large $s_0$
  \begin{equation*} %\label{eq:core_psied_choice_1}
    \big(1+\psi_p(s_1)\big)\big(1/n-\psi_p(s_0)-\tfrac{\ln2}{s_0}\big)\leq\big(1/n-\psi_p(s_1)\big)\big(1+\psi_1(s_0)+\tfrac{\ln2}{s_0}\big).
  \end{equation*}
  Applying again \eqref{eq:tending_to_infty}, we get
  \begin{equation} \label{eq:almost_main_1}
    \big(1+\psi_p(s_1)\big)\big(1/n-\psi_p(s_0)\big)\leq\big(1+\psi_1(s_0)\big)\big(1/n-\psi_p(s_1)\big)(1+o(1))\ \text{ as }\ s_0\to\infty.
  \end{equation}
  In view of Proposition \ref{prop:new_liminf} we can choose $s_0$ large enough to guarantee $\psi_1(s_0)$ to be however close to $\bpsi_1$. Thus, in view of the rough estimates
  \[ \psi_p(s_1)\geq\bpsi_p+o(1),\qquad\psi_p(s_0)\leq\apsi_p+o(1),\qquad\text{as }s_0\to\infty, \]
  \eqref{eq:almost_main_1} leads us to \eqref{eq:main_1}.

  \subsection{The second choice of $\La$ and $\gT$} \label{sec:choice_2}

  Let $m=1$ and $\La=\La_\Theta^\ast$, $\gT=\gT_\Theta^\ast$, where
  \begin{equation} \label{eq:dual_lattice}
    \La_\Theta^\ast=T_\tr\Theta\Z^d=
    \begin{pmatrix}
      E_m & \tr\Theta \\
      0 & E_n
    \end{pmatrix}\Z^d
  \end{equation}
  and $\gT_\Theta^\ast:s\mapsto\pmb\tau^\ast(s)$ is defined by
  \begin{equation} \label{eq:path_choice_2}
    \tau_1^\ast(s)=-ns,\quad\tau_2^\ast(s)=\ldots=\tau_d^\ast(s)=s.
  \end{equation}
  Clearly, $\La_\Theta^\ast$ is dual for $\La_\Theta$. Therefore, it follows from linear independence of $1,\theta_1,\ldots,\theta_n$ over $\Q$ that there are no nonzero points of $\La$ with first coordinate equal to zero. Hence
  \[ \lambda_p(\cB(s))e^{-ns}\to0\quad\text{ and }\quad\lambda_p(\cB(s))e^s\to+\infty\quad\text{ as }\quad s\to+\infty, \]
  i.e.
  \begin{equation} \label{eq:tending_to_infty_choice_2}
    s(n-\psi_p(s))\to+\infty\quad\text{ and }\quad s(1+\psi_p(s))\to+\infty\quad\text{ as }s\to+\infty.
  \end{equation}
  Besides that, there are arbitrarily large values of $s$, such that the parallelepiped $\lambda_1(\cB(s))\cB(s)$ contains a lattice point on its front facet. For each $s_0$ satisfying this condition Corollary \ref{cor:core_psied} gives us $s_1$ such that \eqref{eq:core_psied} and \eqref{eq:core_s1_s0_psied} hold. Applying \eqref{eq:path_choice_2} we rewrite \eqref{eq:core_psied} and \eqref{eq:core_s1_s0_psied} as
  \begin{equation} \label{eq:core_psied_choice_2}
    \begin{array}{c}
      s_0(n-\psi_1(s_0))-\ln2\leq s_1(n-\psi_p(s_1))\leq s_0(n-\psi_1(s_0)), \\ \vphantom{\frac{\big|}{}}
      s_0(1+\psi_p(s_0))\leq s_1(1+\psi_p(s_1))\leq s_0(1+\psi_p(s_0))+\ln2.
    \end{array}
  \end{equation}
  and
  \begin{equation} \label{eq:core_s1_s0_psied_choice_2}
    s_1=s_0\Big(1+\frac{1}{d}\big(\psi_p(s_0)-\psi_1(s_0)\big)\Big).
  \end{equation}
  In this case we immediately have
  \begin{equation} \label{eq:s_1_grows_choice_2}
    s_1\to\infty\quad\text{ as }\quad s_0\to\infty,
  \end{equation}
  since $\psi_p(s_0)\geq\psi_1(s_0)$.
  It follows from \eqref{eq:tending_to_infty_choice_2}, \eqref{eq:core_psied_choice_2} and \eqref{eq:s_1_grows_choice_2} that all the sides of \eqref{eq:core_psied_choice_2} are positive for large $s_0$. Thus, we may conclude from \eqref{eq:core_psied_choice_2} that for large $s_0$
  \begin{equation*} %\label{eq:core_psied_choice_1}
    \big(1+\psi_p(s_1)\big)\big(n-\psi_1(s_0)-\tfrac{\ln2}{s_0}\big)\leq\big(n-\psi_p(s_1)\big)\big(1+\psi_p(s_0)+\tfrac{\ln2}{s_0}\big).
  \end{equation*}
  Applying again \eqref{eq:tending_to_infty_choice_2}, we get
  \begin{equation} \label{eq:almost_dual_main_2}
    \big(1+\psi_p(s_1)\big)\big(n-\psi_1(s_0)\big)\leq\big(1+\psi_p(s_0)\big)\big(n-\psi_p(s_1)\big)(1+o(1))\ \text{ as }\ s_0\to\infty.
  \end{equation}
  In view of Proposition \ref{prop:new_liminf} we can choose $s_0$ large enough to guarantee $\psi_1(s_0)$ to be however close to $\bpsi_1$. Thus, in view of the rough estimates
  \[ \psi_p(s_1)\geq\bpsi_p+o(1),\qquad\psi_p(s_0)\leq\apsi_p+o(1),\qquad\text{as }s_0\to\infty, \]
  \eqref{eq:almost_dual_main_2} leads us to
  \begin{equation*}% \label{eq:dual_main_2}
    \big(1+\bpsi_p\big)\big(n-\bpsi_1\big)\leq\big(1+\apsi_p\big)\big(n-\bpsi_p\big),
  \end{equation*}
  or, after excluding references to the context,
  \begin{equation} \label{eq:dual_main_2}
    \big(1+\bpsi_p(\La_\Theta^\ast,\gT_\Theta^\ast)\big)\big(n-\bpsi_1(\La_\Theta^\ast,\gT_\Theta^\ast)\big)\leq
    \big(1+\apsi_p(\La_\Theta^\ast,\gT_\Theta^\ast)\big)\big(n-\bpsi_p(\La_\Theta^\ast,\gT_\Theta^\ast)\big),
  \end{equation}

  \subsection{Transference argument} \label{sec:transference}

  Let $m=1$. In \cite{german_AA} the following is proved.

  \begin{proposition} \label{prop:starred_psis_via_psis_m_is_1}
    We have
    \[ \bpsi_p(\La_\Theta^\ast,\gT_\Theta^\ast)=-n\apsi_{d+1-p}(\La_\Theta,\gT_\Theta)\quad\text{ and }\quad
       \apsi_p(\La_\Theta^\ast,\gT_\Theta^\ast)=-n\bpsi_{d+1-p}(\La_\Theta,\gT_\Theta)\,. \]
  \end{proposition}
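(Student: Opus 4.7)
The plan is to deduce this identity by combining Mahler's classical inequality for successive minima of reciprocal convex bodies with a simple matching of the two paths $\gT_\Theta$ and $\gT_\Theta^\ast$ via a linear reparameterization of $s$.

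First I would check that $\La_\Theta^\ast$ really is the dual lattice of $\La_\Theta$. Since $\La_\Theta=T_\Theta^{-1}\Z^d$ and the dual of $A\Z^d$ is $\tr{(A^{-1})}\Z^d$, the dual of $\La_\Theta$ equals $\tr{T_\Theta}\Z^d$, which by a direct block computation agrees with the matrix $T_{\tr\Theta}$ appearing in \eqref{eq:dual_lattice}.

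Next I would compare the polar body of $\cB(s)$ with $\cB^\ast$ evaluated at a rescaled parameter. Because $(\cB_\infty^d)^\circ=\cB_1^d$ and $(DK)^\circ=D^{-1}K^\circ$ for diagonal $D$, one has $(\cB(s))^\circ=D_{-\pmb\tau(s)}\cB_1^d$. With $m=1$ the vector $-\pmb\tau(s)=(-s,s/n,\ldots,s/n)$ coincides exactly with $\pmb\tau^\ast(s/n)$ as defined by \eqref{eq:path_choice_2}. Using the standard sandwich $\cB_1^d\subseteq\cB_\infty^d\subseteq d\,\cB_1^d$, this yields
\[
 (\cB(s))^\circ\;\subseteq\;\cB^\ast(s/n)\;\subseteq\; d\cdot(\cB(s))^\circ,
\]
so the successive minima of $\cB^\ast(s/n)$ with respect to $\La_\Theta^\ast$ and those of $(\cB(s))^\circ$ with respect to $\La_\Theta^\ast$ differ by a factor in $[1/d,1]$.

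Third I would invoke Mahler's inequality $1\leq\lambda_p(\cB(s),\La_\Theta)\,\lambda_{d+1-p}((\cB(s))^\circ,\La_\Theta^\ast)\leq d!$ and substitute $\cB^\ast(s/n)$ for the polar body at the cost of an additional bounded factor. Taking logarithms and dividing by $s$ we obtain
\[
 \psi_p(\La_\Theta,\gT_\Theta,s)\;+\;\frac{1}{n}\,\psi_{d+1-p}(\La_\Theta^\ast,\gT_\Theta^\ast,s/n)\;=\;O(1/s),
\]
since the second term absorbs a factor of $s/n$ in its denominator rather than $s$. Letting $s\to+\infty$ kills the error, and using $\limsup(-f)=-\liminf f$ converts this into $n\,\apsi_p(\La_\Theta,\gT_\Theta)=-\bpsi_{d+1-p}(\La_\Theta^\ast,\gT_\Theta^\ast)$ together with the symmetric identity obtained by swapping $\limsup$ and $\liminf$; re-indexing $p\mapsto d+1-p$ then yields both stated equalities.

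The main obstacle is really only bookkeeping: one must pin down the correct reparameterization $s\mapsto s/n$ that matches the two diagonal actions. Once it is observed that the vectors $-\pmb\tau(s)$ and $\pmb\tau^\ast(s/n)$ literally coincide (which is forced by the asymmetric rôle of the first coordinate in both paths), the remainder is a routine application of Mahler duality combined with the trivial comparison between $\cB_\infty^d$ and $\cB_1^d$.
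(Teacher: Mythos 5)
Your proof is correct. Note that the paper itself does not prove Proposition~\ref{prop:starred_psis_via_psis_m_is_1} --- it is quoted from~\cite{german_AA} --- so there is no in-paper argument to compare against, but yours is the natural one and almost certainly the one used there.

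All the ingredients check out. The dual lattice identification is right: for $\La=A\Z^d$ the dual is $(\tr A)^{-1}\Z^d$, so the dual of $\La_\Theta=T_\Theta^{-1}\Z^d$ is $\tr{T_\Theta}\Z^d$, which is exactly the matrix displayed in \eqref{eq:dual_lattice}. The path matching is also correct: with $m=1$, $\pmb\tau(s)=(s,-s/n,\dots,-s/n)$, hence $-\pmb\tau(s)=(-s,s/n,\dots,s/n)=\pmb\tau^\ast(s/n)$, so $(\cB(s))^\circ=D_{-\pmb\tau(s)}\cB_1^d$ and $\cB^\ast(s/n)=D_{-\pmb\tau(s)}\cB_\infty^d$ differ only by the $\cB_1^d\subseteq\cB_\infty^d\subseteq d\,\cB_1^d$ sandwich. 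Combined with Mahler's bound $1\leq\lambda_p(K,\La)\,\lambda_{d+1-p}(K^\circ,\La^\ast)\leq d!$, this gives
\[
  \frac1d\ \leq\ \lambda_p\bigl(\cB(s),\La_\Theta\bigr)\,\lambda_{d+1-p}\bigl(\cB^\ast(s/n),\La_\Theta^\ast\bigr)\ \leq\ d!\,,
\]
so the logarithmic sum is bounded independently of $s$, and after dividing by $s$ one correctly obtains
\[
  \psi_p(\La_\Theta,\gT_\Theta,s)+\tfrac1n\,\psi_{d+1-p}(\La_\Theta^\ast,\gT_\Theta^\ast,s/n)=O(1/s),
\]
where the factor $1/n$ comes from the parameter $s/n$ appearing in the denominator of the definition of $\psi$. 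Since $s/n\to\infty$ together with $s$, passing to $\limsup$ and $\liminf$ and re-indexing $p\mapsto d+1-p$ yields both stated identities. The only small point worth stating explicitly in a write-up is the observation that $s\mapsto s/n$ is a bijection of $\R_+$, so $\liminf$ and $\limsup$ over $s$ and over $s/n$ coincide; otherwise the argument is complete.
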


  Applying Proposition \ref{prop:starred_psis_via_psis_m_is_1} to \eqref{eq:dual_main_2} we get for each $p=1,\ldots,d$
  \begin{equation} \label{eq:almost_main_2}
    \big(1/n-\apsi_{d+1-p}(\La_\Theta,\gT_\Theta)\big)\big(1+\apsi_d(\La_\Theta,\gT_\Theta)\big)\leq
    \big(1/n-\bpsi_{d+1-p}(\La_\Theta,\gT_\Theta)\big)\big(1+\apsi_{d+1-p}(\La_\Theta,\gT_\Theta)\big).
  \end{equation}
  Since \eqref{eq:almost_main_2} holds for all $p$, we may substitute $d+1-p$ by $p$ and thus obtain \eqref{eq:main_2}.

\vskip 10mm

\noindent
Oleg N. {\sc German} \\
Moscow Lomonosov State University \\
Vorobiovy Gory, GSP--1 \\
119991 Moscow, RUSSIA \\
\emph{E-mail}: {\fontfamily{cmtt}\selectfont german@mech.math.msu.su, german.oleg@gmail.com}

\vskip 10mm

\noindent
Nikolay G. {\sc Moshchevitin} \\
Moscow Lomonosov State University \\
Vorobiovy Gory, GSP--1 \\
119991 Moscow, RUSSIA \\
\emph{E-mail}: {\fontfamily{cmtt}\selectfont moshchevitin@rambler.ru, moshchevitin@gmail.com}


\begin{thebibliography}{99}


\bibitem{german_AA} \textsc{O.\,N.\,German} \textit{Intermediate Diophantine exponents and parametric geometry of numbers.} Acta Arithmetica, to appear, preprint available at arXiv:1106.2353.
\bibitem{J} \textsc{V.\,Jarn\'{\i}k} \textit{Contribution \`{a} la th\'{e}orie des approximations diophantiennes lin\'{e}aires et homog\`{e}nes.} Czechoslovak Math. J., 4 (1954), 330--353 (in Russian, French summary).
\bibitem{schmidt_summerer} \textsc{W.\,M.\,Schmidt, L.\,Summerer} \textit{Parametric geometry of numbers and applications.} Acta Arithmetica, \textbf{140}:1 (2009), 67--91.
\bibitem{schmidt_summerer_II} \textsc{W.\,M.\,Schmidt, L.\,Summerer} \textit{Diophantine approximation and parametric geometry of numbers.} Monatsh. Math., to appear, DOI 10.1007/s00605-012-0391-z.


\end{thebibliography}
\end{document}